\newtheorem{theorem}{Theorem}[section]
\theoremstyle{definition}
\newtheorem{example}[theorem]{Example}
\theoremstyle{remark}
\numberwithin{equation}{section}
\begin{document}

\title[]{On approximate n-ring homomorphisms and n-ring derivations}
\author[M. Eshaghi Gordji]{M. Eshaghi Gordji }
\address{Department of Mathematics, Semnan University, P. O. Box 35195-363, Semnan, Iran}
\email{madjid.eshaghi@gmail.com}

 \subjclass[2000]{Primary 39B82;
Secondary 39B52, 46H25.}

\keywords{Hyers-Ulam-Rassias stability; n-ring derivation; n-ring
homomorphism; Banach algebra.}

\begin{abstract}
Let $A,B$ be two rings and let $ X$ be an $ A-$module. An additive
map $h: A\to B$ is called n-ring homomorphism if
$h(\Pi^n_{i=1}a_i)=\Pi^n_{i=1}h(a_i),$ for all $a_1,a_2, \cdots,a_n
\in {A}$. An additive map $D: A\to  X$ is called $n$-ring derivation
if
$$D(\Pi^n_{i=1}a_i)=D(a_1)a_2\cdots a_n+a_1D(a_2)a_3\cdots a_n+\cdots +a_1a_2\cdots a_{n-1}D(a_n),$$
for all $a_1,a_2, \cdots,a_n \in {\mathcal A}$.  In  this paper we
investigate the Hyers-Ulam-Rassias stability of $n$-ring
homomorphisms and n-ring derivations.
\end{abstract}
\maketitle


\section{Introduction and preliminaries}

Let $A,B$ be two rings (algebras). An additive (linear) map $h: A\to
B$ is called n-ring homomorphism (n-homomorphism) if
$h(\Pi^n_{i=1}a_i)=\Pi^n_{i=1}h(a_i),$ for all $a_1,a_2, \cdots,a_n
\in {A}$. The concept of n-homomorphisms was studied for complex
algebras by Hejazian, Mirzavaziri, and Moslehian [12] (see also [7],
[9], [10], [22]).

Let $A$ be ring and let $ X$ be an $ A-$module. An additive map $D:
A\to  X$ is called $n$-ring derivation if
$$D(\Pi^n_{i=1}a_i)=D(a_1)a_2\cdots a_n+a_1D(a_2)a_3\cdots a_n+\cdots +a_1a_2\cdots a_{n-1}D(a_n),$$
for all $a_1,a_2, \cdots,a_n \in { A}$. A 2-ring derivation is then
a ring derivation, in the usual sense, from an algebra into its
module. Furthermore, every ring derivation is clearly also a n-ring
derivation for all $n\geq 2,$ but the converse is false, in general.
For instance let
\[{\mathcal A} := \left[ \begin{array}{cccc}
{0} & {\Bbb R} & {\Bbb R} & {\Bbb R}\\
{0} & {0} & {\Bbb R} & {\Bbb R}\\
{0} & {0} & {0} & {\Bbb R}\\
{0} & {0} & {0} & {0}\\
 \end{array} \right] \]
then $\mathcal A$ is an  algebra equipped with the usual matrix-like
operations. It is easy to see that $$\mathcal A^3\neq 0=\mathcal
A^4.$$ Then every additive map $f:\mathcal A \to \mathcal A$ is a
4-ring derivation.

We say that a functional equation (*) is stable if any function $f$
approximately satisfying the equation (*) is near to an exact
solution of (*).  Such a problem was formulated by S. M.~Ulam [26]
in 1940 and solved in the next year for the Cauchy functional
equation by D. H.~Hyers [13] in the framework of Banach spaces.
Later, T. Aoki [2] and Th. M. Rassias [25] considered mappings $f$
from a normed space into a Banach space such that the norm of the
Cauchy difference $f(x+y)-f(x)-f(y)$ is bounded by the expression
$\epsilon(\|x\|^p+\|y\|^p)$ for all $x,y$ and some $\epsilon\geq 0$
and  $p\in [0,1)$. The terminology "Hyers-Ulam-Rassias stability"
was indeed originated from Th. M. Rassias's paper [25] (see also
[8], [23], [15], [18]).

D. G.~Bourgin is the first mathematician dealing with the stability
of ring homomorphisms. The topic of approximate ring homomorphisms
was studied by a number of mathematicians, see [3, 5, 14, 6, 16, 20,
23, 24] and references therein.

It seems that approximate derivations was first investigated by
K.-W. Jun and D.-W. Park [17]. Recently, the stability of
derivations have been investigated by some authors; see [1, 4, 11,
17, 19, 21] and references therein. In  this paper we investigate
the Hyers-Ulam-Rassias stability of $n$-ring homomorphisms  and
n-ring derivations.

\section{Main result}

We start our work with a result  concerning approximate n-ring
homomorphisms, which can be regarded as an extension of Theorem 1 of
[3].

\begin{theorem}
Let  $ A$ be a ring, $ B$ be a Banach algebra and let $\delta$ and
$\varepsilon$ be nonnegative real numbers. Suppose $f$ is a mapping
from $ A$ to $ B$ such that
$$\|f(a+b)-f(a)-f(b)\|\leq\varepsilon \eqno (2.1)$$ and that
$$\|f(\Pi^n_{i=1}a_i)-\Pi^n_{i=1}f(a_i)\|\leq\delta \eqno (2.2)$$
for all $a, b, a_1, a_2,...,a_n  \in  A$. Then there exists a unique
n-ring homomorphism $h:  A \to  B$ such that
$$\|f(a)-h(a)\|\leq\varepsilon \eqno(2.3)$$
for all $a \in  A$. Furthermore,
$$(\Pi^k_{i=1}h(a_i))(\Pi^n_{i=k+1}f(a_i)-\Pi^n_{i=k+1}h(a_i))=(\Pi^k_{i=1}f(a_i)-\Pi^k_{i=1}h(a_i))(\Pi^n_{i=k+1}h(a_i))=0~~~
\eqno (2.4)
$$
 for all $a_1,a_2,...,a_n\in  A$ and all $k\in \{1,2,...,n-1\}$.
\end{theorem}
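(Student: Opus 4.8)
The plan is to obtain the additive map $h$ from the classical theorem of Hyers, and then to squeeze the multiplicative information out of (2.2) by a scaling trick in which we enlarge only a chosen block of the variables. Since $B$ is in particular a Banach space and (2.1) is the Cauchy inequality, Hyers' theorem [13] gives that the limit $h(a):=\lim_{m\to\infty}2^{-m}f(2^m a)$ exists for every $a\in A$, that $h$ is additive, that $\|f(a)-h(a)\|\leq\varepsilon$ for all $a\in A$, and that $h$ is the unique additive map with this last property. Since every $n$-ring homomorphism is in particular additive, once we check that $h$ is an $n$-ring homomorphism, its uniqueness among $n$-ring homomorphisms satisfying (2.3) is automatic; so the remaining task is purely multiplicative.

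To see that $h$ is an $n$-ring homomorphism, in (2.2) replace each $a_i$ by $2^m a_i$. Then $\prod_{i=1}^n a_i$ is multiplied by $2^{mn}$, so
$$\Big\|\,\frac{f(2^{mn}\prod_{i=1}^n a_i)}{2^{mn}}-\prod_{i=1}^n\frac{f(2^m a_i)}{2^m}\,\Big\|\leq\frac{\delta}{2^{mn}}.$$
Let $m\to\infty$. The first term inside the norm tends to $h(\prod_{i=1}^n a_i)$, since $2^{-mn}f(2^{mn}c)\to h(c)$ is a subsequence of the sequence defining $h(c)$ with $c=\prod_{i=1}^n a_i$; each factor $2^{-m}f(2^m a_i)$ tends to $h(a_i)$; and, as $B$ is a Banach algebra, a finite product of convergent sequences converges to the product of the limits. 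Since the right-hand side tends to $0$ we get $h(\prod_{i=1}^n a_i)=\prod_{i=1}^n h(a_i)$, i.e. $h$ is an $n$-ring homomorphism.

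For the identities (2.4), fix $k\in\{1,\dots,n-1\}$ and run the same argument, but now replacing only $a_1,\dots,a_k$ by $2^m a_1,\dots,2^m a_k$ while leaving $a_{k+1},\dots,a_n$ untouched (so that $\prod_{i=1}^n a_i$ is multiplied by $2^{mk}$ and we divide (2.2) by $2^{mk}$). Passing to the limit yields
$$h\Big(\prod_{i=1}^n a_i\Big)=\Big(\prod_{i=1}^k h(a_i)\Big)\Big(\prod_{i=k+1}^n f(a_i)\Big).$$
Comparing this with $h(\prod_{i=1}^n a_i)=\big(\prod_{i=1}^k h(a_i)\big)\big(\prod_{i=k+1}^n h(a_i)\big)$ from the previous paragraph and subtracting gives the first equality in (2.4). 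Symmetrically, scaling only $a_{k+1},\dots,a_n$ by $2^m$ gives $h(\prod_{i=1}^n a_i)=\big(\prod_{i=1}^k f(a_i)\big)\big(\prod_{i=k+1}^n h(a_i)\big)$, and the same comparison yields the second equality in (2.4).

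There is no deep obstacle here; the one point that needs a little care is the interchange of limit and finite product used above. This is handled by the elementary estimate $\|xy-x'y'\|\leq\|x-x'\|\,\|y\|+\|x'\|\,\|y-y'\|$ in the Banach algebra $B$, together with the boundedness of convergent sequences, applied inductively $n-1$ times; everything else is bookkeeping with the orders of the partial products.
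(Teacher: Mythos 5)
Your proof is correct and follows essentially the same route as the paper: both obtain $h$ from Hyers' theorem and then extract the multiplicative identities by substituting $2^m a_i$ into (2.2), dividing by the appropriate power of $2$, and passing to the limit. The only difference is organizational --- you scale a whole block of variables simultaneously, while the paper scales one variable at a time and builds the mixed identities $(\prod_{i=1}^k h(a_i))(\prod_{i=k+1}^n f(a_i))=h(\prod_{i=1}^n a_i)$ by induction on $k$ --- but the intermediate identities and the deduction of (2.4) from them are the same.
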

\begin{proof}
Put $h(a)=\lim_m\frac{1}{2^m}f(2^ma)$ for all $a\in A.$ Then by
Hyers' Theorem, $h$ is additive. We will show that $h$ is n-ring
homomorphism. For every $a_1,a_2,...,a_n\in  A$ we have
\begin{align*}
h(a_1a_2...a_n)&=\lim_m \frac{1}{2^m}f(2^m(a_1a_2...a_n))=\lim_m
\frac{1}{2^m}f((2^ma_1)a_2...a_n)\\
&=\lim_m
\frac{1}{2^m}\{f((2^ma_1)a_2...a_n)-f(2^ma_1)(\Pi^n_{i=2}f(a_i))+f(2^ma_1)(\Pi^n_{i=2}f(a_i))\}\\
&\leq \lim
_m\frac{1}{2^m}\{\delta+f(2^ma_1)(\Pi^n_{i=2}f(a_i))\} \hspace {0.4 cm} (by~~~~~~ (2.2))\\
&=h(a_1)(\Pi^n_{i=2}f(a_i)).
\end{align*}
Hence
$$h(a_1a_2...a_n)=h(a_1)(\Pi^n_{i=2}f(a_i)).\eqno (2.5)$$
By (2.5) it follows that
$$h(a_1)f(2^ma_2)(\Pi^n_{i=3}f(a_i))=h(2^ma_1a_2...a_n)=2^mh(a_1a_2...a_n)$$
for all $a_1,a_2,...,a_n\in  A, m\in \Bbb N$. Dividing both sides of
above equality by $2^m$ and taking the limit $m\rightarrow\infty $.
Then we have
$$h(a_1)h(a_2)(\Pi^n_{i=3}f(a_i))=\lim_m h(a_1)\frac{1}{2^m}f(2^ma_2)(\Pi^n_{i=3}f(a_i))=h(a_1a_2...a_n)$$
Hence by (2.5)  we have

$$h(a_1)h(a_2)(\Pi^n_{i=3}f(a_i))=h(a_1a_2...a_n)=h(a_1)(\Pi^n_{i=2}f(a_i)).$$
Now, proceed in this way to prove that
$$(\Pi^k_{i=1}h(a_i))(\Pi^n_{i={k+1}}f(a_i))=h(a_1a_2...a_n)\eqno (2.6)$$
for all $a_1,a_2,...,a_n\in  A$ and all $k\in \{1,2,...,n-1\}$. Put
 $k=n-1$ in (2.6), we obtain
$$(\Pi^{n-1}_{i=1}h(a_i))f(2^ma_n)=h(2^m(a_1a_2...a_n))=2^mh(a_1a_2...a_n)\eqno (2.7)$$
for all $a_1,a_2,...,a_n\in A, m\in \Bbb N.$ Dividing both sides of
(2.7) by $2^m$ and taking the limit  $m\rightarrow\infty $, it
follows that $h$ is a n-homomorphism. On the other hand $h$ is
additive and $h(a)=\lim_m\frac{1}{2^m}f(2^ma)$ for all $a\in A.$
Then we have
$$(\Pi^k_{i=1}f(a_i))(\Pi^n_{i={k+1}}h(a_i))=h(a_1a_2...a_n)=(\Pi^n_{i={1}}h(a_i))\eqno (2.8)$$
for all $a_1,a_2,...,a_n\in  A$ and all $k\in \{1,2,...,n-1\}$, and
(2.4) follows (2.6) and (2.8). Obviously the uniqueness property of
$h$ follows from additivity.
\end{proof}

Similarly to the proof of Theorem 2 of [3], we can prove the
Hyers-Ulam-Rassias type stability of n-ring homomorphisms as
follows.

\begin{theorem}
Let  $ A$ be a normed algebra,  $ B$ be a Banach algebra, $\delta$
and $\varepsilon$ be nonnegative real numbers and let $p,q$ be a
real numbers such that $p,q<1$ or $p,q>1$. Assume that $f:A\to B$
satisfies the system of functional inequalities
$$\|f(a+b)-f(a)-f(b)\|\leq\varepsilon (\|a\|^p+\|b\|^p)$$ and
$$\|f(\Pi^n_{i=1}a_i)-\Pi^n_{i=1}f(a_i)\|\leq\delta (\Pi^n_{i=1}\|a_i\|^q)$$
for all $a, b, a_1, a_2,...,a_n  \in  A$. Then there exists a unique
n-ring homomorphism $h:  A \to  B$ and a constant $k$ such that
$$\|f(a)-h(a)\|\leq k\varepsilon \|x\|^p $$
for all $a \in  A$.
\end{theorem}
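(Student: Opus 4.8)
The plan is to follow Hyers' direct method exactly as in the classical Rassias theorem to manufacture $h$, and then to run the telescoping argument of Theorem 2.1 while carrying the polynomial weights. I would split according to the hypothesis. If $p,q<1$, put $h(a)=\lim_m 2^{-m}f(2^m a)$; if $p,q>1$, put $h(a)=\lim_m 2^m f(2^{-m}a)$. In either case the first inequality is precisely a Cauchy-difference bound with control $\varepsilon(\|a\|^p+\|b\|^p)$, so the standard estimate (iterate $\|f(2a)-2f(a)\|\le 2\varepsilon\|a\|^p$, which is the first inequality with $b=a$, and telescope) shows that the defining sequence converges in the Banach space $B$, that $h$ is additive, and that $\|f(a)-h(a)\|\le k\varepsilon\|a\|^p$ with $k=\tfrac{2}{|2-2^p|}$. (The bound displayed in the statement should read $\|a\|^p$, not $\|x\|^p$.)

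Next I would show $h$ is an $n$-ring homomorphism, imitating the chain (2.5)--(2.7). Take $p,q<1$. Since scalar multiplication commutes with the algebra product, $h(a_1\cdots a_n)=\lim_m 2^{-m}f((2^m a_1)a_2\cdots a_n)$, and the multiplicative inequality gives
$$2^{-m}\bigl\|f((2^m a_1)a_2\cdots a_n)-f(2^m a_1)\textstyle\prod_{i=2}^n f(a_i)\bigr\|\le \delta\,2^{m(q-1)}\|a_1\|^q\textstyle\prod_{i=2}^n\|a_i\|^q\longrightarrow 0,$$
because $q<1$; hence $h(a_1\cdots a_n)=h(a_1)\prod_{i=2}^n f(a_i)$, the analogue of (2.5). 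Now replace $a_2$ by $2^m a_2$ in this identity, use $h(2^m x)=2^m h(x)$ on the left, divide by $2^m$ and let $m\to\infty$ to obtain $h(a_1\cdots a_n)=h(a_1)h(a_2)\prod_{i=3}^n f(a_i)$. Iterating over the remaining factors yields $\prod_{i=1}^k h(a_i)\prod_{i=k+1}^n f(a_i)=h(a_1\cdots a_n)$ for $k=1,\dots,n$, and $k=n$ is exactly the $n$-homomorphism identity. The case $p,q>1$ is identical after replacing every $2^m a_i$ by $2^{-m}a_i$; the weight $2^{m(q-1)}$ is then replaced by $2^{m(1-q)}$, which again vanishes since now $q>1$.

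For uniqueness, suppose $h'$ is another additive map with $\|f(a)-h'(a)\|\le k'\varepsilon\|a\|^p$. Then $\|h(a)-h'(a)\|\le C\|a\|^p$ for $C=(k+k')\varepsilon$; applying this inequality to $2^m a$ (when $p<1$) or to $2^{-m}a$ (when $p>1$) and using the homogeneity of $h$ and $h'$ forces $\|h(a)-h'(a)\|\le C\,2^{m(p-1)}\|a\|^p\to 0$ (resp. $C\,2^{m(1-p)}\|a\|^p\to 0$), so $h=h'$.

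I expect the only genuine point of care to be the two limit passages in the product identities: one must check that, after multiplying by $2^{\mp m}$, the $\delta$-term carries the factor $2^{m(q-1)}$ (resp. $2^{m(1-q)}$) so that the sign condition on $q$ annihilates it, and one must invoke joint continuity of multiplication in the Banach algebra $B$ so that $2^{-m}f(2^m a_1)\prod_{i\ge 2}f(a_i)\to h(a_1)\prod_{i\ge 2}f(a_i)$. Everything else is the routine Hyers--Rassias bookkeeping already carried out in Theorem 2.1 and in [3, Theorem 2].
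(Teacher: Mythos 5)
Your proof is correct and follows exactly the route the paper intends: the paper gives no proof of this theorem beyond the remark that it goes ``similarly to the proof of Theorem 2 of [3]'' and to its own Theorem 2.1, and your argument is precisely that adaptation (direct Hyers--Rassias construction of $h$ in the two cases $p,q<1$ and $p,q>1$, followed by the weighted telescoping chain (2.5)--(2.7)). The points you flag --- the factor $2^{m(q-1)}$ versus $2^{m(1-q)}$ and the continuity of multiplication in $B$ --- are handled correctly, so there is nothing to add.
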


Now we will  prove the stability of n-ring derivations from a normed
algebra into a Banach module.

\begin{theorem} Let ${\mathcal A}$ be a normed algebra and let $\mathcal X$ be a Banach $\mathcal A-$ module. Suppose the map
 $f:{\mathcal A}\longrightarrow{\mathcal X}$ satisfying the system
 of inequalities:

$$\|f(a+b)-f(a)-f(b)\|\leq \epsilon (\|a\|^p+\|b\|^p) \hspace {0.5 cm}(a,b\in \mathcal A),\eqno \hspace {0.5 cm}(2.9)$$

$$\|f(\Pi^n_{i=1}a_i)-f(a_1)\Pi^n_{i=2}a_i-a_1f(a_2)\Pi^k_{i=3}a_i-\cdots -\Pi^{n-1}_{i=1}a_if(a_n)\|\leq
 \epsilon (\sum^n_{i=1}\|a_i\|^p)  \eqno(2.10)$$
for all $a_1,a_2,...,a_n\in \mathcal A,$ where $\epsilon$ and p are
constants in $\Bbb R^+ \cup\{0\}.$ If $p\neq 1,$ then there is a
unique n-ring derivation $D:{\mathcal A}\longrightarrow{\mathcal X}$
such that
$$\|f(a)-D(a)\|\leq \frac{2\epsilon}{2-2^p}\|a\|^p \eqno (2.11)$$ for all $a\in \mathcal
A.$ Moreover if for every $c\in \Bbb C$ and $a\in \mathcal A,$
$f(ca)=cf(a),$ then $f=D.$
\end{theorem}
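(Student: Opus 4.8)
The plan is to carry out the Hyers--Ulam--Rassias scheme, in the $n$-linear form already used in the proof of Theorem~2.1. Consider first the case $p<1$ and put $D(a)=\lim_{m\to\infty}2^{-m}f(2^ma)$ for $a\in\mathcal A$. Setting $b=a$ in (2.9) gives $\|f(2a)-2f(a)\|\le 2\epsilon\|a\|^p$, so $\|2^{-(j+1)}f(2^{j+1}a)-2^{-j}f(2^ja)\|\le \epsilon\,2^{j(p-1)}\|a\|^p$, and summing over $j$ shows that $(2^{-m}f(2^ma))_m$ is Cauchy in the Banach space $\mathcal X$; hence $D$ is well defined, it is additive by Hyers' classical argument, and $\|f(a)-D(a)\|\le\sum_{j\ge 0}\epsilon\,2^{j(p-1)}\|a\|^p=\frac{2\epsilon}{2-2^p}\|a\|^p$, which is (2.11). (For $p>1$ one uses instead $D(a)=\lim_{m\to\infty}2^mf(2^{-m}a)$ and gets the estimate with $2^p-2$ in place of $2-2^p$.)

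The substantial step is to show that $D$ is an $n$-ring derivation, which I would do by proving, by induction on $k\in\{1,\dots,n\}$, the identity
\[
D(a_1\cdots a_n)=\sum_{j=1}^{k}a_1\cdots a_{j-1}\,D(a_j)\,a_{j+1}\cdots a_n+\sum_{j=k+1}^{n}a_1\cdots a_{j-1}\,f(a_j)\,a_{j+1}\cdots a_n \qquad(\star_k)
\]
valid for all $a_1,\dots,a_n\in\mathcal A$, the case $k=n$ being exactly the $n$-ring derivation identity. For $k=1$, replace $a_1$ by $2^ma_1$ in (2.10): since the first argument enters every monomial linearly, the product becomes $2^m(a_1\cdots a_n)$, the first term on the right becomes $f(2^ma_1)a_2\cdots a_n$, and every remaining term acquires a factor $2^m$; dividing the inequality by $2^m$ and letting $m\to\infty$ — its right side is at most $\epsilon(2^{m(p-1)}\|a_1\|^p+2^{-m}\sum_{i\ge 2}\|a_i\|^p)\to 0$, while $2^{-m}f(2^ma_1)\to D(a_1)$, the limit passing through the module products because $x\mapsto cx$ and $x\mapsto xc$ are bounded on $\mathcal X$ for each $c\in\mathcal A$ — yields $(\star_1)$. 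For the step $k\to k+1$ (with $k<n$), substitute $a_{k+1}\mapsto 2^ma_{k+1}$ in $(\star_k)$: the left side becomes $2^mD(a_1\cdots a_n)$; on the right, each summand other than the $(k+1)$-st contains $a_{k+1}$ as an ordinary ring factor and so is multiplied by $2^m$, whereas the $(k+1)$-st summand becomes $a_1\cdots a_k\,f(2^ma_{k+1})\,a_{k+2}\cdots a_n$; dividing by $2^m$ and letting $m\to\infty$ (using $2^{-m}f(2^ma_{k+1})\to D(a_{k+1})$ and continuity of the module action) replaces that summand by $a_1\cdots a_k\,D(a_{k+1})\,a_{k+2}\cdots a_n$ and leaves the others fixed, giving $(\star_{k+1})$. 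Hence $(\star_n)$ holds and $D$ is an $n$-ring derivation. (In the case $p>1$ the same substitutions are made with $a_j\mapsto 2^{-m}a_j$ followed by multiplication by $2^m$.)

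Uniqueness is standard: if $D'$ is another $n$-ring derivation with $\|f(a)-D'(a)\|\le\frac{2\epsilon}{|2-2^p|}\|a\|^p$, then $D-D'$ is additive and $\|D(a)-D'(a)\|\le\frac{4\epsilon}{|2-2^p|}\|a\|^p$; replacing $a$ by $2^ma$ (resp.\ $2^{-m}a$), using additivity, and dividing (resp.\ multiplying) by $2^m$ forces $\|D(a)-D'(a)\|\le\frac{4\epsilon}{|2-2^p|}2^{m(p-1)}\|a\|^p\to 0$, so $D=D'$. For the last assertion, if $f(ca)=cf(a)$ for all $c\in\Bbb C$ and $a\in\mathcal A$, then in particular $f(2^ma)=2^mf(a)$ (resp.\ $f(2^{-m}a)=2^{-m}f(a)$), so $D(a)=\lim_m 2^{-m}f(2^ma)=f(a)$ (resp.\ $=\lim_m 2^mf(2^{-m}a)=f(a)$); thus $f=D$, and in particular $f$ is itself an additive $n$-ring derivation.

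The only genuinely delicate point is the bookkeeping in the inductive step — verifying that scaling $a_{k+1}$ multiplies precisely the summands other than the $(k+1)$-st by $2^m$, while the $(k+1)$-st summand produces, in the limit, exactly the missing derivative term $a_1\cdots a_k D(a_{k+1})a_{k+2}\cdots a_n$ — together with the routine but indispensable observation that the module multiplication is (separately, hence jointly) continuous, so that the limit may be moved inside each monomial. The rest is the Hyers--Rassias machinery already present in Theorems~2.1 and~2.2.
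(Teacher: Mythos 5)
Your argument for $p<1$ is correct, and for the key step it takes a genuinely different route from the paper's. The paper establishes the derivation identity in one shot: it sets $s=\frac{1-p}{|1-p|}$, replaces \emph{all} of $a_1,\dots,a_n$ simultaneously by $m^sa_i$ in (2.10), normalizes by $m^{-ns}$, and assembles the identity for $D$ from a single triangle-inequality estimate combining its (2.12)--(2.14). You instead scale one variable at a time and prove the exact intermediate identities $(\star_k)$ by induction on $k$; this is essentially the mechanism the paper uses in Theorem~2.1 for $n$-ring homomorphisms (its equations (2.5)--(2.6)), transplanted to the derivation setting. Your version has the advantage that after the base case every step is an exact algebraic identity with no further $\epsilon$'s to control, at the cost of invoking continuity of the module action at each of the $n$ stages rather than once. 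The construction of $D$, the error estimate (2.11), the uniqueness argument, and the deduction of $f=D$ from homogeneity all agree with the paper in substance.

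However, your parenthetical treatment of $p>1$ does not work. In the base case $(\star_1)$ you would replace $a_1$ by $2^{-m}a_1$ in (2.10) and multiply through by $2^m$; the right-hand side then becomes $2^m\epsilon\bigl(2^{-mp}\|a_1\|^p+\sum_{i\ge 2}\|a_i\|^p\bigr)$, and the term $2^m\sum_{i\ge 2}\|a_i\|^p$ diverges, so the error does not vanish and $(\star_1)$ is not obtained. Scaling one variable at a time is incompatible with the additive control $\epsilon\sum_i\|a_i\|^p$ in the contractive direction. The paper's simultaneous substitution fares somewhat better here: it produces the factor $m^{-ns}\sum_i\|m^sa_i\|^p=m^{s(p-n)}\sum_i\|a_i\|^p$, which vanishes for $p<1$ (and for $p>n$), though one should note that the paper's displayed bound silently replaces the sum in (2.10) by the product $\Pi_i\|m^sa_i\|^p$, which is the hypothesis of Theorem~2.4 rather than of this theorem; with the sum bound the range $1<p\le n$ is not covered by either argument. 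So your proof is complete for $p<1$ but has a genuine gap for $p>1$, and that gap cannot be closed merely by switching from $2^m$ to $2^{-m}$ in the same one-variable scheme.
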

\begin{proof}
By Rassias's Theorem and (2.9), it follows that there exists a
unique additive mapping $D:{\mathcal A}\longrightarrow{\mathcal A}$
satisfies  (2.11). We have to show that $D$ is a n-derivation. Let
$s=\frac{1-p}{|1-p|}$, and let $a,a_1,a_2,...,a_n\in \mathcal A.$
For each $m\in \Bbb N$, we have $D(a)=m^{-s}D(m^sa),$ therefore
\begin{align*}
\|m^{-s}f(m^sa)-D(a)\|&=m^{-s}\|f(m^sa)-D(m^sa)\|\\&\leq
m^{-s}\frac{2\epsilon}{2-2^p}\|a\|^p \|m^sa\|^p\\
&= m^{s(p-1)}\frac{2\epsilon}{2-2^p}\|a\|^p.
\end{align*}
Since $s(1-p)\leq 0$ then we have
$$\lim_m \|m^{-s}f(m^sa)-D(a)\|=0. \eqno (2.12)$$
 Similarly we can show that
$$ \|m^{-ns}f(m^{ns}\Pi^n_{i=1}a_i)-D(\Pi^n_{i=1}a_i)\|\leq
 m^{ns(p-1)}\frac{2\epsilon}{2-2^p}\|\Pi^n_{i=1}a_i\|^p.$$
 Therefore we have

$$\lim_m \|m^{-ns}f(m^{ns}\Pi^n_{i=1}a_i)-D(\Pi^n_{i=1}a_i)\|^p=0. \eqno (2.13)$$
By (2.10), for each $m\in \Bbb N$ we have
\begin{align*}
\|m^{-ns}f(m^{ns}\Pi^n_{i=1}a_i)&-m^{-s}f(m^{s}a_1)\Pi^n_{i=2}(a_i)-m^{-s}a_1f(m^{s}a_2)\Pi^n_{i=3}(a_i)\\
&-...- m^{-s}\Pi^{n-1}_{i=1}(a_i)f(m^{s}a_n)\|\\
&=m^{-ns}\|f(\Pi^n_{i=1}(m^{s}a_i))-f(m^sa_1)(\Pi^n_{i=2}(m^{s}a_i))\\
&-\sum^{n-1}_{j=2}\Pi^{j-1}_{l=1}(m^{s}a_l)f(m^sa_j))\Pi^{n}_{l=j+1}(m^{s}a_l)-\Pi^{n-1}_{l=1}(m^{s}a_l)f(m^sa_n)\|\\
&\leq m^{-ns}\epsilon \Pi^n_{i=1}\|m^{s}a_i\|^p=m^{ns(p-1)}\epsilon
\Pi^n_{i=1}\|a_i\|^p.
\end{align*}
Thus we have
\begin{align*}
\lim_m\|m^{-ns}f(m^{ns}\Pi^n_{i=1}a_i)&-m^{-s}f(m^{s}a_1)\Pi^n_{i=2}(a_i)-m^{-s}a_1f(m^{s}a_2)\Pi^n_{i=3}(a_i)\\
&-...- m^{-s}\Pi^{n-1}_{i=1}(a_i)f(m^{s}a_n)\|=0 \hspace {3.8 cm}
(2.14)
\end{align*}
for all $a_1,a_2,...,a_n\in \mathcal A$. On the other hand we have
\begin{align*}
\|D(\Pi^n_{i=1}a_i)&-D(a_1)\Pi^n_{i=2}a_i-a_1D(a_2)\Pi^n_{i=3}a_i-\cdots
-\Pi^{n-1}_{i=1}a_iD(a_n)\|\\
&  \leq  \|D(\Pi^n_{i=1}a_i)-m^{-ns}f(m^{ns}\Pi^n_{i=1}a_i)\|+\|m^{-ns}f(m^{ns}\Pi^n_{i=1}a_i)\\
&-m^{-s}f(m^{s}a_1)\Pi^n_{i=2}(a_i)-m^{-s}a_1f(m^{s}a_2)\Pi^n_{i=3}(a_i)\\
&-...- m^{-s}\Pi^{n-1}_{i=1}(a_i)f(m^{s}a_n)\|\\
&+\|m^{-s}f(m^sa_1)\Pi^n_{i=2}a_i-D(a_1)\Pi^n_{i=2}a_i\|\\
&+\|m^{-s}a_1f(m^sa_2)\Pi^n_{i=3}a_i-a_1D(a_2)\Pi^n_{i=3}a_i\|\\
&+...\\
&+\|m^{-s}\Pi^{n-1}_{i=1}a_i f(m^sa_n)-\Pi^{n-1}_{i=1}a_iD(a_n)\|
\end{align*}
for all $a_1,a_2,...,a_n\in \mathcal A$. According to (2.10), (2.13)
and (2.14), if  $m\rightarrow \infty$, then the right hand side of
above inequality tends to 0, so we have
$$D(\Pi^n_{i=1}a_i)=D(a_1)\Pi^n_{i=2}a_i+a_1D(a_2)\Pi^n_{i=3}a_i+\cdots
+\Pi^{n-1}_{i=1}a_iD(a_n),$$ for all $a_1,a_2,...,a_n\in \mathcal
A$. Hence  $D$ is a n-ring derivation.  The uniqueness property of
$D$ follows from additivity. Let now for every $c\in \Bbb C$ and
$a\in \mathcal A,$ $f(ca)=cf(a),$ then by (2.11), we have
\begin{align*}\|f(a)-D(a)\|&=\|m^{-s}f(m^sa)-m^{-s}D(m^sa)\|\\
&\leq m^{-s}\frac{2\epsilon}{2-2^p}\|m^sa\|^p\\
&= m^{s(p-1)}\frac{2\epsilon}{2-2^p}\|a\|^p \end{align*} for all
$a\in \mathcal A.$ Hence by letting $m\rightarrow \infty$ in above
inequality, we conclude that $f(a)=D(a)$ for all $a\in \mathcal A$.

\end{proof}
Similarly we can prove the following Theorem which can be regarded
as an extension of Theorem 2.6 of [11].
\begin{theorem} Let $p$, $q$ be real numbers such that $p,q<1$, or $p,q>1$.
Let ${\mathcal A}$ be a Banach algebra and let $\mathcal X$ be a
Banach $\mathcal A-$ module. Suppose the map
 $f:{\mathcal A}\longrightarrow{\mathcal X}$ satisfying the system
 of inequalities:

$$\|f(a+b)-f(a)-f(b)\|\leq \epsilon (\|a\|^p+\|b\|^p) \hspace {0.5 cm}(a,b\in \mathcal A),$$

$$\|f(\Pi^n_{i=1}a_i)-f(a_1)\Pi^n_{i=2}a_i-a_1f(a_2)\Pi^k_{i=3}a_i-\cdots -\Pi^{n-1}_{i=1}a_if(a_n)\|\leq
 \epsilon (\pi^n_{i=1}\|a_i\|^q)  $$
for all $a_1,a_2,...,a_n\in \mathcal A,$ where $\epsilon$ and p are
constants in $\Bbb R^+ \cup\{0\}.$ Then there exists  a unique
n-ring derivation $D:{\mathcal A}\longrightarrow{\mathcal X}$ such
that
$$\|f(a)-D(a)\|\leq \frac{2\epsilon}{2-2^p}\|a\|^p \eqno (2.11)$$ for all $a\in \mathcal
A.$
\end{theorem}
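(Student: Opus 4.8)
The plan is to transcribe the proof of Theorem 2.3 almost verbatim, the only genuinely new point being an exponent estimate that forces the joint hypothesis on $p$ and $q$. First I would apply Rassias's Theorem to the Cauchy inequality $\|f(a+b)-f(a)-f(b)\|\leq\epsilon(\|a\|^p+\|b\|^p)$ to produce a unique additive map $D:\mathcal A\to\mathcal X$ satisfying (2.11); observe that this step only sees the exponent $p$, which is precisely why the error bound in (2.11) involves $p$ alone and not $q$. As in Theorem 2.3, put $s=\frac{1-p}{|1-p|}$ (so $s=1$ when $p<1$ and $s=-1$ when $p>1$, which makes sense since the hypothesis rules out $p=1$). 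Additivity gives $D(a)=m^{-s}D(m^sa)$ for every $m\in\mathbb N$, so combining with (2.11),
$$\|m^{-s}f(m^sa)-D(a)\|\leq m^{s(p-1)}\frac{2\epsilon}{2-2^p}\|a\|^p\longrightarrow 0$$
because $s(p-1)\leq 0$; the same estimate applied to $m^{ns}\Pi^n_{i=1}a_i$ shows $m^{-ns}f(m^{ns}\Pi^n_{i=1}a_i)\to D(\Pi^n_{i=1}a_i)$.

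Next I would substitute the scaled elements $m^sa_1,\dots,m^sa_n$ into the second inequality; using homogeneity of the norm this yields
$$\Big\|m^{-ns}f\big(\Pi^n_{i=1}m^sa_i\big)-m^{-s}f(m^sa_1)\Pi^n_{i=2}a_i-\cdots-m^{-s}\Pi^{n-1}_{i=1}a_i\,f(m^sa_n)\Big\|\leq m^{ns(q-1)}\epsilon\,\Pi^n_{i=1}\|a_i\|^q.$$
The crucial observation — and the single place the hypothesis on $q$ enters — is that this bound tends to $0$, which requires $s(q-1)<0$; this holds exactly because $q<1$ when $s=1$ (i.e.\ $p<1$) and $q>1$ when $s=-1$ (i.e.\ $p>1$). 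This is the reason the theorem insists that $p$ and $q$ lie on the same side of $1$.

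Finally I would assemble the triangle inequality as in Theorem 2.3: estimate
$$\big\|D(\Pi^n_{i=1}a_i)-D(a_1)\Pi^n_{i=2}a_i-\cdots-\Pi^{n-1}_{i=1}a_i D(a_n)\big\|$$
by the sum of (i) $\|D(\Pi^n_{i=1}a_i)-m^{-ns}f(m^{ns}\Pi^n_{i=1}a_i)\|$, (ii) the quantity estimated in the previous paragraph, and (iii) the $n$ terms of the form $\big\|\Pi_{l<j}a_l\,(m^{-s}f(m^sa_j)-D(a_j))\,\Pi_{l>j}a_l\big\|$. Term (i) vanishes in the limit by the first paragraph, term (ii) by the second, and each term in (iii) vanishes because $m^{-s}f(m^sa_j)-D(a_j)\to 0$ in $\mathcal X$ while left and right multiplication by the fixed elements $\Pi_{l<j}a_l$ and $\Pi_{l>j}a_l$ are bounded on the Banach $\mathcal A$-module $\mathcal X$. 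Letting $m\to\infty$ forces the left-hand side to be $0$, so $D$ is an $n$-ring derivation. Uniqueness follows from additivity in the standard Hyers–Ulam way: another such $D'$ satisfying (2.11) obeys $\|D(a)-D'(a)\|=m^{-s}\|D(m^sa)-D'(m^sa)\|\leq m^{s(p-1)}\frac{4\epsilon}{|2-2^p|}\|a\|^p\to 0$. The main obstacle is not deep — it is merely keeping the exponent bookkeeping straight so that the single sign condition $s(q-1)<0$ is recognized as equivalent to the stated restriction on $p$ and $q$; everything else is a routine adaptation of the proof of Theorem 2.3.
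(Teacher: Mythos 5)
Your proposal is correct and follows exactly the route the paper intends: the paper gives no separate proof of this theorem, stating only that it is proved ``similarly'' to Theorem 2.3, and your adaptation is precisely that argument, with the one genuinely new ingredient --- the observation that the product bound scales as $m^{ns(q-1)}$ and that the hypothesis that $p$ and $q$ lie on the same side of $1$ is what forces $s(q-1)<0$ --- correctly identified and justified. Nothing further is needed.
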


The following counterexample, which is a modification of Luminet's
example (see [16]), shows that Theorem 2.2 is failed for $p=1$ (see
[3]).

\begin{example}
Define a function $\varphi:{\mathbb R}\to {\mathbb R}$ by
$$\varphi(x)=\begin{cases}
0 & |x|\leq 1\\
&\qquad\qquad.\\
x\ln(|x|) & |x|>1
\end{cases}$$
Let $f:\Bbb R\to {\mathcal M_3(\Bbb R)}$ is defined by

\[f(x)=\left[ \begin{array}{ccc}
{0} & 0 & 0 \\
{\varphi(x)} & {0} & 0 \\
{0} & {0} & {0} \\
\end{array} \right]\]
for all $x\in {\mathbb R}$. Then
$$\|f(a+b)-f(a)-f(b)\|\leq \varepsilon(\|a\|+\|b\|)$$
and
$$|f(\Pi^n_{i=1}a_i)-(\Pi^n_{i=1}f(a_i))|\leq \delta(\Pi^n_{i=1}\|a_i\|^2)$$ for some $\delta>0, \varepsilon>0$ and all
$a_1,a_2,..a_n\in \Bbb R $; see [3]. Therefore $f$ satisfies the
conditions of Theorem 2.2 with $p=1$, $q=2$. There is however no
n-ring homomorphism  $h:\Bbb R\to {\mathcal M_3 (\Bbb R)}$ and no
constant $k>0$ such that
$$\|f(a)-h(a)\|\leq k\varepsilon\|a\| \qquad (a\in \Bbb R)\,.$$
\end{example}
Also example 2.7 of [11] shows that Theorem 2.4 above is failed for
$p=1$.

\end{document}